\documentclass[10pt]{article}
\usepackage[utf8]{inputenc} 
\usepackage[T1]{fontenc}    
\usepackage[hidelinks]{hyperref}       
\usepackage{url}            
\usepackage{booktabs}       
\usepackage{amsfonts}       
\usepackage{nicefrac}       
\usepackage{microtype}      
\usepackage[table]{xcolor}
\usepackage{natbib}
\usepackage{fullpage}
\usepackage{multirow}
\usepackage{soul}           

\newcommand{\calA}{\mathcal{A}}

\newcommand{\calS}{\mathcal{S}}

\newcommand{\E}{\mathbb{E}}

\newcommand{\ls}{\left}
\newcommand{\rs}{\right}

\newcommand{\interior}{\mathrm{int}}

\newcommand{\dom}{\mathrm{dom}}
\newcommand{\inner}[2]{\ls\langle #1, \; #2 \rs\rangle}

\newcommand\numberthis{\addtocounter{equation}{1}\tag{\theequation}}
\makeatletter

\newcommand{\Rmnum}[1]{\textup{\expandafter\@slowromancap\romannumeral #1@}}

\makeatother

\usepackage[table]{xcolor}
\usepackage{tabstackengine}
\usepackage{appendix}
\usepackage{titletoc}
\usepackage{titlesec}
\usepackage{enumerate}
\usepackage{amsmath}
\usepackage{amssymb}
\usepackage{mathtools}
\usepackage{amsthm}
\usepackage{algorithm}
\usepackage{algorithmic}
\usepackage{graphicx}
\usepackage{makecell}
\usepackage{array}
\usepackage{threeparttable}
\usepackage{setspace}
\usepackage{dsfont}
\usepackage{multicol}
\usepackage{authblk}
\usepackage{lipsum}
\usepackage{tabularx}
\usepackage{enumitem}
\usepackage{subcaption}
\allowdisplaybreaks

\theoremstyle{plain}
\newtheorem{theorem}{Theorem}[section]
\newtheorem{proposition}[theorem]{Proposition}
\newtheorem{lemma}[theorem]{Lemma}

\theoremstyle{definition}

\newtheorem{assumption}[theorem]{Assumption}

\title{Does Policy Mirror Descent Converge in Policy with Non-Decomposable Mirror Maps? A Counterexample}
\author[1]{Wenye Li}
 \author[1]{Ke Wei}
\affil[1]{School of Data Science, Fudan University, Shanghai, China.}
\date{\today}

\begin{document}
\maketitle

\begin{abstract}
    Policy mirror descent (PMD) refers to a class of policy optimization methods associated with different mirror maps. For a broad class of decomposable Legendre mirror maps encompassing many standard choices, \citet{Li2026PMDPolicy} show that the PMD policy sequence converges to an optimal policy from any admissible initialization under any positive constant step size, even when the optimal policy is not unique.  In this note, we provide a counterexample with assistance from OpenAI's GPT-5.6 model, showing that PMD with a non-decomposable Legendre mirror map need not converge in policy. 
\end{abstract}



\section{Introduction}
Let $\Delta(\mathcal{X})$ denote the probabilistic simplex over some finite set $\mathcal{X}$. A finite discounted Markov decision process (MDP) is specified by a tuple $(\calS,\calA,P,r,\gamma,\mu)$, where $\calS$ and $\calA$ are finite state and action spaces, respectively, $P:\calS\times\calA\to\Delta(\calS)$ is the transition kernel, $r:\calS\times\calA\to[0,1]$ is the reward function, $\gamma\in[0,1)$ is the discount factor, and $\mu\in\Delta(\calS)$ is the initial state distribution. Given a policy $\pi:\calS\to\Delta(\calA)$, a trajectory $\{(s_t,a_t,r_t)\}_{t\ge0}$ is generated as follows: the initial state is sampled from $s_0\sim\mu$, and at each time step $t\ge0$, the agent selects an action $a_t\sim\pi(\cdot\mid s_t)$, receives the reward $r_t=r(s_t,a_t)$, and transitions to the next state $s_{t+1}\sim P(\cdot\mid s_t,a_t)$. The state value and the action value functions are defined as 
\begin{align*}
    \forall\, s \in \calS,\; a\in\calA: \quad 
    &V^{\pi}(s) := \E \, \Bigg[ \sum_{t=0}^\infty \gamma^{t} r(s_t, a_t) \; \Big | \; s_0 = s, \, \pi  \Bigg], \\
    &Q^{\pi}(s,a) := \E \, \Bigg[ \sum_{t=0}^\infty \gamma^{t} r(s_t, a_t) \; \Big | \; (s_0, a_0) = (s, a), \, \pi  \Bigg].
\end{align*}

Policy mirror descent (PMD) updates policies using a Bregman divergence induced by a convex mirror map $h:\mathbb{R}^{|\calA|}\to\mathbb{R}\cup\{+\infty\}$. Assume that $\Delta(\calA)\subseteq\operatorname{dom}h$ and that $h$ is differentiable on $\operatorname{int}(\operatorname{dom}h)$. Given a current policy $\pi$ satisfying $\pi(\cdot\mid s)\in\operatorname{int}(\operatorname{dom}h)$ for all $s\in\calS$, PMD computes a new policy based on the action value function $Q^\pi$ by solving the following optimization problem at each state:
\begin{align}  \label{eq:PMD}
    \mbox{(PMD)} \quad \forall\, s\in\calS: \;\;  \pi^+(\cdot|s) \in \underset{p\in\Delta(\calA)}{\arg\max} \; \Big \{ \eta \,  \big \langle p, \; Q^{\pi}(s,\cdot) \big \rangle - D_h(p \, \| \, \pi(\cdot|s)) \Big\},
\end{align}
where $\eta > 0$ is the update step size and $D_h(p \, \| \, q) := h(p) - h(q) - \inner{\nabla h(q)}{p-q})$ is the Bregman divergence. Different choices of the mirror map $h$ yield different policy optimization methods. In particular, when $h$ is the squared function $h(p) = \frac{1}{2} \| p \|_2^2$ or the negative Shannon entropy $h(p) = \sum_{a\in\calA} p(a) \log p(a)$,  PMD deduces to projected Q-ascent (PQA,~\citet{Xiao_2022,ppgliu}) and softmax natural policy gradient (softmax NPG,~\citet{Xiao_2022,Agarwal_Kakade_Lee_Mahajan_2019}), respectively. The global convergence in value of PMD has been extensively studied. When $h$ is of Legendre type, it is shown in~\citet{Xiao_2022,Lan_2021} that the state value function of PMD $V^{\pi_k}$ converges to the optimal value $V^*$ in a dimension-free $O(1/k)$ sublinear rate under arbitrary constant step size $\eta > 0$. For  adaptive or geometrically increasing step sizes, the global linear convergence can be established~\citep{Xiao_2022,Johnson_Pike-Burke_Rebeschini_2023,yuan2023general}. Several variants of PMD have also been investigated, including regularized PMD~\citep{Lan_2021,Cen_Cheng_Chen_Wei_Chi_2022,Zhan_Cen_Huang_Chen_Lee_Chi_2021}, homotopic PMD~\citep{Li_Zhao_Lan_2022}, TD-PMD~\citep{liu2025tdpmd,Li2026Markov-TDPMD,Geist_Scherrer_Pietquin_2019}.

Beyond value convergence, policy convergence of PMD has also been studied. For decomposable mirror maps of the form $h(p)=\sum_{a\in\calA}\psi(p(a))$, where $\psi$ satisfies standard Legendre-type assumptions, \citet{Li2026PMDPolicy} establish that the policy sequence generated by PMD converges to a limiting optimal policy under any constant step size $\eta>0$, even when the optimal policy is not unique. For mirror maps that need not be decomposable, \citet{Lin2022PMD-policy-convergence} establish finite-time convergence to an optimal policy under any constant step size, provided that $h$ is continuously differentiable on the entire probability simplex $\Delta(\calA)$ and satisfies an $L$-cocoercivity condition. These results raise the following question:
\begin{quote}\centering\it
    Does PMD converge in policy for every non-decomposable
    Legendre mirror map?
\end{quote}

\textbf{This note, however, gives a negative answer by providing a counter example.} The counterexample is based on a non-decomposable mirror map $h$ that combines algebraic, logarithmic, and trigonometric terms and satisfies the Legendre conditions: it is proper, closed, convex, essentially strictly convex, and essentially smooth. The associated MDP is a three-armed bandit with $\calS=\{s\}$ and $\calA=\{a_1,a_2,a_3\}$, where $a_1$ and $a_2$ are optimal actions and $a_3$ is suboptimal.  By computation, it is shown that the policy sequence generated by PMD  from a particular initialization on this example oscillates infinitely times due to the periodicity of trigonometric component in $h$.

\section{Preliminary and Problem Settings}
Let $\Pi$ be the set that contains all admissible policies. Recall the value functions $V^\pi$ and $Q^\pi$. It is well known that there exists at least one optimal policy $\pi^*$, such that the optimal value functions $V^*$ and $Q^*$ are obtained (see for example~\citet{suttonRL}),
\begin{align*}
    \forall\, (s,a) \in \calS\times\calA: \quad V^{\pi^*}(s) = V^*(s) := \max_{\pi} \, V^{\pi}(s), \;\;\; Q^{\pi^*}(s,a) = Q^*(s,a) := \max_\pi \, Q^\pi(s,a).
\end{align*}
Note that the optimal policy $\pi^*$ might not be unique. In fact, it is not hard to see that the set of all optimal policies can be represented as
\begin{align*}
    \big\{ \pi^* \; | \;  V^{\pi^*} = V^*, \; Q^{\pi^*} = Q^* \big\} &= \big\{ \pi^* \in \Pi \; | \; \pi^*(a^\prime|s) = 0, \; \forall\, s\in\calS, \; \forall\, a^\prime\not\in\calA^*_s \big\} \\
    &= \big\{ \pi^* \in \Pi \; | \; \mbox{supp}(\pi^*(\cdot|s)) \subseteq \calA^*_s \big\},
\end{align*}
where $\calA^*_s := \{ a\in\calA\, | \, Q^*(s,a) = V^*(s) \}$ is the optimal action set. Thus $\Pi^*$ contains multiple optimal policies if $|\calA^*_s| > 1$ at some state $s$.

Recall that $h$ denotes the mirror map used in PMD. Generally, we expect that $h$ is of Legendre type~\citep{bauschke1997legendre,Xiao_2022} and is well defined over $\Delta(\calA)$.

\begin{assumption}[Legendreness]  \label{ass:Legendreness}
    We assume that $h$ satisfies the following conditions:
    \begin{enumerate}[label=(\alph*)]
        \item \label{ass:Legendreness-domain} $\dom \, h \supseteq \Delta(\calA)$;
        \item \label{ass:Legendreness-proper} $h$ is proper, closed, and convex;
        \item \label{ass:Legendreness-smooth} $h$ is essentially smooth. That is, $h$ is differentiable over $\interior\, \dom \, h$, and for any sequence $\{ x_k \}_{k \geq 0}$ such that $x_k \in \interior\, \dom\, h$ and $x_k \to x \in \mbox{bd} \, \dom \, h$, there holds $\| \nabla h(x_k) \| \to \infty$;
        \item \label{ass:Legendreness-strictly-convex} $h$ is strictly convex over $\interior\, \dom \, h$.
    \end{enumerate}
\end{assumption}

The following lemma is useful in the PMD analysis, which can be verified by the KKT-condition of the PMD optimization problem (Eq~\eqref{eq:PMD}).
\begin{lemma}[\citep{Lin2022PMD-policy-convergence}]  \label{lem:KKT}
    Let $\{ \pi_k \}_{k \geq 0}$ be the policy sequence generated by PMD \textup(Eq~\eqref{eq:PMD}\textup) with $\pi_0(\cdot|s) \in \big( \interior\, \dom \, h \big) \cap \Delta(\calA)$ for all $s\in\calS$. Then there hold $\pi_k(\cdot|s) \in  \big( \interior\, \dom \, h \big) \cap \Delta(\calA)$ for all $s\in\calS$ and $k \geq 0$. Furthermore, there exist a scalar $\nu^{(k)}$ and a dual vector $\lambda^{(k)} \in \mathbb{R}^{|\calA|}$ such that
    \begin{align*}
        \forall\, a\in \calA: \quad \frac{\partial h}{\partial p(a)} (\pi_{k+1}(\cdot|s)) = \frac{\partial h}{\partial p(a)} (\pi_k(\cdot|s)) + \eta \, Q^{\pi_k}(s,a) + \lambda^{(k)}(a) + \nu^{(k)},
    \end{align*}
    where $\lambda^{(k)}(a)$ satisfies $\lambda^{(k)}(a) \geq 0$ and $\lambda^{(k)}(a) \cdot \pi_{k+1}(a|s) = 0$.
\end{lemma}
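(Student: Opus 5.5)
We prove Lemma~\ref{lem:KKT} by induction on $k$, analyzing the per-state PMD subproblem (Eq~\eqref{eq:PMD}) as a convex program over the simplex. Fix $s$ and write $q=\pi_k(\cdot|s)\in(\interior\,\dom\,h)\cap\Delta(\calA)$ and $g=\eta\,Q^{\pi_k}(s,\cdot)$. The subproblem is equivalent to minimizing
\[
F(p):=h(p)-\inner{\nabla h(q)+g}{p}
\]
over $p\in\Delta(\calA)$, since the other terms of the Bregman divergence do not depend on $p$.

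\textbf{Existence and uniqueness.} $F$ is closed, proper and convex by Assumption~\ref{ass:Legendreness}\ref{ass:Legendreness-proper}. Its restriction to the compact set $\Delta(\calA)\subseteq\dom\,h$ is lower semicontinuous, so a minimizer $p^+$ exists. Strict convexity on $\interior\,\dom\,h$ (Assumption~\ref{ass:Legendreness}\ref{ass:Legendreness-strictly-convex}) gives uniqueness once we know the minimizer is interior.

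\textbf{Interiority.} This is the main obstacle. I would use the standard Legendre argument (as in \citet{bauschke1997legendre}). Suppose $p^+\in\mathrm{bd}\,\dom\,h$. Set $x_t=p^+ + t(q-p^+)$ for $t\in(0,1]$. Each $x_t$ lies in $\Delta(\calA)$ by convexity, and in $\interior\,\dom\,h$ by the line-segment principle, because $q$ is interior.

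The one-dimensional convex function $\phi(t)=F(x_t)$ is differentiable on $(0,1]$. Essential smoothness (Assumption~\ref{ass:Legendreness}\ref{ass:Legendreness-smooth}), together with the fact that directional derivatives of an essentially smooth function toward the boundary blow up (Rockafellar, Thm.~26.1), yields
\[
\phi'(t)=\inner{\nabla h(x_t)}{q-p^+}-\inner{\nabla h(q)+g}{q-p^+}\to-\infty \quad \text{as } t\downarrow0.
\]
Hence $\phi(t)<\phi(0)$ for small $t>0$, contradicting the optimality of $p^+$. So $p^+\in\interior\,\dom\,h$, which gives the first claim and completes the induction.

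\textbf{KKT conditions.} With interiority in hand, $F$ is differentiable at $p^+$. The constraints $\sum_a p(a)=1$ and $p(a)\ge0$ are affine, so Slater-type qualification holds automatically. The KKT conditions therefore give multipliers $\nu'\in\mathbb{R}$ and $\lambda\ge0$ with $\lambda(a)\,p^+(a)=0$ such that
\[
\nabla h(p^+)-\nabla h(q)-g-\lambda-\nu'\mathbf 1=0 .
\]
Setting $\nu^{(k)}=\nu'$ and $\lambda^{(k)}=\lambda$ and reading this componentwise gives exactly the displayed identity, with the sign convention and complementary slackness stated in the lemma.

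Note that $\lambda^{(k)}$ may be nonzero, since $p^+$ can lie on the boundary of the simplex while remaining in $\interior\,\dom\,h$ when $\dom\,h$ is larger than the simplex.
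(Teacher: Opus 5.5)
Your proposal is correct and takes the same route the paper indicates. The paper gives no separate proof: it attributes the lemma to \citet{Lin2022PMD-policy-convergence}, says the identity follows from the KKT conditions of the per-state subproblem, and also records interiority of the iterates in Proposition~\ref{pro:Xiao_Lan}. Your essential-smoothness argument fills in that interiority step, and you are right to invoke Assumption~\ref{ass:Legendreness}, which the statement leaves implicit but which the first claim genuinely needs. One small pointer: the directional-derivative blow-up you use is usually cited as Rockafellar's Lemma~26.2, whereas Theorem~26.1 is the subdifferential characterization (which would also work, via the polyhedral sum rule).
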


\begin{proposition}[\citet{Xiao_2022,Lan_2021}]  \label{pro:Xiao_Lan}
    Suppose that $h$ satisfies Assumption~\ref{ass:Legendreness}. Let $\{ \pi_k \}_{k \geq 0}$ and $\{ V^{\pi_k} \}_{k \geq 0}$ be the policy and value sequence generated by PMD \textup(Eq~\eqref{eq:PMD}\textup) with $\pi_0(\cdot|s) \in \big( \interior\, \dom \, h \big) \cap \Delta(\calA)$ for all $s\in\calS$, respectively. Then there hold $\pi_k(\cdot|s) \in  \big( \interior\, \dom \, h \big) \cap \Delta(\calA)$ for all $s\in\calS$ and $k \geq 0$. Furthermore,
    \begin{alignat*}{2}  
        &\forall\, s\in\calS: \quad &&V^{\pi_{k+1}}(s) \geq V^{\pi_k}(s), \numberthis \label{eq:PMD-monotonicity-V} \\
        &\forall\, s\in\calS, \; \forall \, a\in\calA: \quad && Q^{\pi_{k+1}}(s,a) \geq Q^{\pi_k}(s,a). \numberthis \label{eq:PMD-monotonicity-Q}
    \end{alignat*}
    Furthermore, for arbitrary optimal policy $\pi^* \in \Pi^*$, it holds that 
    \begin{align*}
        \big\| V^* - V^{\pi_k} \big\|_\infty \leq \frac{1}{k+1} \Bigg( \frac{\| D^{\pi^*}_{\pi_0} \|_\infty}{\eta(1-\gamma)} + \frac{1}{(1-\gamma)^2} \Bigg),
    \end{align*}
    where $\| D^{\pi^*}_{\pi_0} \|_\infty = \max_{s\in\calS} \, D_h (\pi^*(\cdot|s) \, \| \, \pi_0(\cdot|s)) < \infty$.
\end{proposition}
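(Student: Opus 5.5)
This is a cited result from \citet{Xiao_2022,Lan_2021}. I would reprove it through the standard three-point descent argument for Bregman proximal steps, combined with the performance difference lemma.

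\textbf{Step 1: interiority.} Under Assumption~\ref{ass:Legendreness}, essential smoothness forces every maximizer of \eqref{eq:PMD} to lie in $\interior\,\dom\,h$. If the maximizer $p$ lay on $\mbox{bd}\,\dom\,h$, then moving from $p$ toward an interior point would increase the objective at an infinite initial rate, since $\|\nabla h\|\to\infty$ near the boundary. Strict convexity on the interior then makes the maximizer unique. By induction on $k$ this gives $\pi_k(\cdot|s)\in(\interior\,\dom\,h)\cap\Delta(\calA)$ for all $k$; this is also the first claim of Lemma~\ref{lem:KKT}.

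\textbf{Step 2: three-point inequality.} Optimality of $\pi_{k+1}(\cdot|s)$ in \eqref{eq:PMD} gives, for every $p\in\Delta(\calA)$,
\begin{align*}
\eta\langle p-\pi_{k+1}(\cdot|s),Q^{\pi_k}(s,\cdot)\rangle \le D_h(p\|\pi_k(\cdot|s)) - D_h(p\|\pi_{k+1}(\cdot|s)) - D_h(\pi_{k+1}(\cdot|s)\|\pi_k(\cdot|s)).
\end{align*}
This follows from the first-order optimality condition (equivalently, from Lemma~\ref{lem:KKT}) together with the three-point identity for Bregman divergences.

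\textbf{Step 3: monotonicity.} Taking $p=\pi_k(\cdot|s)$ in Step 2 shows
\begin{align*}
\langle \pi_{k+1}(\cdot|s),Q^{\pi_k}(s,\cdot)\rangle\ge V^{\pi_k}(s)\quad\text{for every } s.
\end{align*}
The performance difference lemma, with starting state $s$, then yields \eqref{eq:PMD-monotonicity-V}, because each advantage term has a nonnegative expectation under $\pi_{k+1}$. Writing $Q^\pi=r+\gamma PV^\pi$ turns \eqref{eq:PMD-monotonicity-V} into \eqref{eq:PMD-monotonicity-Q}.

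\textbf{Step 4: the $O(1/k)$ rate.} Take $p=\pi^*(\cdot|s)$ in Step 2 and apply the performance difference lemma under the optimal occupancy $d^{\pi^*}_\rho$:
\begin{align*}
(1-\gamma)\big(V^*(\rho)-V^{\pi_k}(\rho)\big)=\E_{s\sim d^{\pi^*}_\rho}\langle \pi^*(\cdot|s)-\pi_k(\cdot|s),Q^{\pi_k}(s,\cdot)\rangle .
\end{align*}
Split the inner product into the part against $\pi_{k+1}$ and the remainder $\langle\pi_{k+1}-\pi_k,Q^{\pi_k}\rangle$. Step 2 bounds the first part by a telescoping Bregman difference divided by $\eta$. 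For the remainder, the performance difference lemma and Step 3 give
\begin{align*}
\langle\pi_{k+1}-\pi_k,Q^{\pi_k}\rangle(s)\le(1-\gamma)\big(V^{\pi_{k+1}}(s)-V^{\pi_k}(s)\big)\cdot\frac{1}{1-\gamma},
\end{align*}
using $d^{\pi_{k+1}}_s(s)\ge 1-\gamma$. After taking the expectation over $d^{\pi^*}_\rho$, these value increments telescope.

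Sum over $k=0,\dots,K$. The Bregman terms telescope to at most $\|D^{\pi^*}_{\pi_0}\|_\infty/\eta$, and the value increments telescope to at most $1/(1-\gamma)$. Since $V^{\pi_k}$ is monotone by Step 3, the sum bounds $(K+1)$ times the final gap. Taking $\rho=\delta_s$ and then the maximum over $s$ gives the stated bound. Finiteness of $D_h(\pi^*\|\pi_0)$ holds because $\Delta(\calA)\subseteq\dom\,h$ and $\pi_0$ is interior.

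\textbf{Main obstacle.} Step 4 is the delicate part: one must handle the mismatch between the distribution $d^{\pi^*}$ and the update direction $\pi_{k+1}$ while keeping the constant $1/(1-\gamma)^2$. I would follow Xiao's bookkeeping exactly for this step.
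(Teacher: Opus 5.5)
Your proposal is correct. The paper gives no proof of this proposition and simply imports it from Xiao (2022) and Lan (2021); your argument reproduces the stated constant with the $1/(k+1)$ factor. It runs through interiority from essential smoothness, the three-point inequality, monotonicity via the performance difference lemma with $d^{\pi_{k+1}}_s(s)\ge 1-\gamma$, and telescoping of both the Bregman terms and the value increments under the fixed occupancy $d^{\pi^*}_\rho$. This is the standard argument of Xiao (2022).

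The one place to tighten is Step~1. There you should move toward the feasible interior point $\pi_k(\cdot|s)$, so that the segment stays in $\Delta(\calA)$. You should also invoke the standard fact (Rockafellar, \emph{Convex Analysis}, \S 26) that for an essentially smooth $h$ the directional derivative from a boundary point of $\dom\,h$ toward an interior point is $-\infty$, rather than appealing only to $\|\nabla h\|\to\infty$.
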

Proposition~\ref{pro:Xiao_Lan} establishes the dimension-free $O(1/k)$ sublinear global value convergence rate for PMD under Legendre mirror map $h$. As optimal policy might not be unique, Proposition~\ref{pro:Xiao_Lan} does not imply that $\{ \pi_k \}_{k \geq 0}$ converges to some optimal policy $\pi^* \in \Pi^*$. To address this question, \citet{Li2026PMDPolicy} establish policy convergence of PMD under any constant step size $\eta>0$ for a general class of decomposable mirror maps $h(p)=\sum_{a\in\calA}\psi(p(a))$, where $\psi$ satisfies standard Legendre-type assumptions. Their result guarantees  cover many commonly used mirror maps, including the squared Euclidean mirror map ($\psi(t) = \frac{1}{2} t^2$), negative Shannon entropy ($\psi(t) = t \log t$), Tsallis entropy ($\psi(t)=t^q$), the Hellinger mapping ($\psi(t)=-\sqrt{1-t^2}$), and Fermi--Dirac entropy ($\psi(t) = t\log t + (1-t) \log (1-t)$).
For mirror maps that need not be decomposable, \citet{Lin2022PMD-policy-convergence} establish finite-time convergence to an optimal policy under any constant step size, provided that $h$ is continuously differentiable on the entire simplex $\Delta(\calA)$ and satisfies an $L$-cocoercivity condition. 

\begin{proposition}[\citep{Lin2022PMD-policy-convergence}]
    Suppose that $h$ satisfies Assumption~\ref{ass:Legendreness}. Furthermore, suppose that $\nabla h(p)$ is well defined at $p \in \mbox{bd} \, \Delta(\calA)$ and $L$-coercive on $\Delta(\calA)$, i.e., there exists a constant $L> 0$ such that
    \[
        \forall\, p, \,q \in\Delta(\calA): \quad \big\langle \nabla h(p) - \nabla h(q), \; p - q \big\rangle \geq \frac{1}{L} \big\| \nabla h(p) - \nabla h(q) \big\|_2^2.
    \]
    Let $\{ \pi_k \}_{k \geq 0}$ be the policy sequence generated by PMD \textup(Eq~\eqref{eq:PMD}\textup) with any $\eta > 0$ and $\pi_0(\cdot|s) \in \big( \interior\, \dom \, h \big) \cap \Delta(\calA)$ for all $s\in\calS$. Then there exists a finite time $T_0$ such that
    \[
        \forall\, k \geq T_0: \quad \pi_k = \pi_{T_0} \in \Pi^*,
    \]
    i.e., PMD terminates after $T_0$.
\end{proposition}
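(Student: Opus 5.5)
The plan is to combine value convergence (Proposition~\ref{pro:Xiao_Lan}), the KKT characterization (Lemma~\ref{lem:KKT}) and cocoercivity. The goal is to show that every iteration in which $\pi_{k+1}$ still puts mass on a suboptimal action produces a \emph{uniform} increase in value. Since values are bounded by $1/(1-\gamma)$, this can happen only finitely often. Once the iterate is optimal, it is a fixed point of PMD.

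\textbf{Step 1 (a uniform gap).} Let $\delta:=\min_s\min_{a\notin\calA^*_s}(V^*(s)-Q^*(s,a))>0$, and assume some state has a suboptimal action (otherwise every policy is optimal and the claim follows from Step 4 with $T_0=0$). By Proposition~\ref{pro:Xiao_Lan}, $Q^{\pi_k}\uparrow$ and $\|V^*-V^{\pi_k}\|_\infty\to0$. Since $Q^{\pi}(s,a)=r(s,a)+\gamma\sum_{s'}P(s'|s,a)V^\pi(s')$, we also get $Q^{\pi_k}\to Q^*$ uniformly. Hence there is $K$ such that for $k\ge K$, every $s$, every $a^*\in\calA^*_s$ and every $a\notin\calA^*_s$,
\[
Q^{\pi_k}(s,a^*)-Q^{\pi_k}(s,a)\ge \delta/2 .
\]

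\textbf{Step 2 (a large dual step forces a large primal step).} Fix $k\ge K$ and a state $s$ with $\pi_{k+1}(a|s)>0$ for some $a\notin\calA^*_s$. Write $g_j:=\nabla h(\pi_j(\cdot|s))$, which is finite even on the boundary by hypothesis. By Lemma~\ref{lem:KKT},
\[
g_{k+1}-g_k=\eta Q^{\pi_k}(s,\cdot)+\lambda^{(k)}+\nu^{(k)}\mathbf 1 ,
\]
with $\lambda^{(k)}(a)=0$ by complementary slackness. For any $a^*\in\calA^*_s$ we have $\lambda^{(k)}(a^*)\ge0$. The $(a^*,a)$-components of $g_{k+1}-g_k$ therefore differ by at least $\eta\delta/2$, which gives $\|g_{k+1}-g_k\|_2\ge \eta\delta/(2\sqrt2)$. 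Cocoercivity then yields
\[
\langle g_{k+1}-g_k,\pi_{k+1}-\pi_k\rangle\ge c:=\eta^2\delta^2/(8L).
\]
Next I expand this inner product using the KKT identity:
\begin{itemize}
\item the $\nu^{(k)}\mathbf 1$ term vanishes, because $\langle\mathbf 1,\pi_{k+1}-\pi_k\rangle=0$;
\item $\langle\lambda^{(k)},\pi_{k+1}-\pi_k\rangle=-\langle\lambda^{(k)},\pi_k\rangle\le0$.
\end{itemize}
Hence $\eta\langle Q^{\pi_k}(s,\cdot),\pi_{k+1}(\cdot|s)-\pi_k(\cdot|s)\rangle\ge c$.

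\textbf{Step 3 (value increment).} By the PMD optimality of $\pi_{k+1}$ against the feasible choice $p=\pi_k(\cdot|s')$, we have $\langle Q^{\pi_k}(s',\cdot),\pi_{k+1}-\pi_k\rangle\ge0$ at every state $s'$. The performance difference lemma writes $V^{\pi_{k+1}}(s)-V^{\pi_k}(s)$ as $\frac1{1-\gamma}$ times an occupancy-weighted sum of these nonnegative terms. The occupancy measure $d^{\pi_{k+1}}_s$ puts mass at least $1-\gamma$ on $s$ itself, so
\[
V^{\pi_{k+1}}(s)-V^{\pi_k}(s)\ge c/\eta .
\]
By monotonicity \eqref{eq:PMD-monotonicity-V} and $V\le 1/(1-\gamma)$, this can occur only finitely many times over all $s$. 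So there is $T_0$ with $\pi_{T_0}\in\Pi^*$.

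\textbf{Step 4 (termination).} If $\pi_k\in\Pi^*$, then $Q^{\pi_k}=Q^*$. Every $p$ supported on $\calA^*_s$ attains $\langle p,Q^*(s,\cdot)\rangle=V^*(s)$, which is the maximum over $\Delta(\calA)$, and $D_h(p\|\pi_k(\cdot|s))\ge0$ with equality at $p=\pi_k(\cdot|s)$. Therefore $p=\pi_k(\cdot|s)$ is a maximizer of \eqref{eq:PMD}, and it is the unique one by strict convexity of $h$. So $\pi_{k+1}=\pi_k$, and by induction $\pi_k=\pi_{T_0}$ for all $k\ge T_0$.

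The main obstacle is Step 2. It needs $\nabla h$ to be finite at boundary points, so that KKT and cocoercivity can be applied when $\pi_{k+1}$ sits on a face of the simplex, and it needs the sign of $\langle\lambda^{(k)},\pi_{k+1}-\pi_k\rangle$ handled carefully.
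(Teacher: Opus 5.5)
Your proof is correct and complete. The paper gives no proof of this proposition; it quotes it from \citet{Lin2022PMD-policy-convergence} as background, so there is no in-paper argument to compare yours against.

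Your route is self-contained. It uses only Lemma~\ref{lem:KKT}, Proposition~\ref{pro:Xiao_Lan} and the performance difference lemma, and each hypothesis enters where it should:
\begin{enumerate}
\item Well-definedness of $\nabla h$ on $\mbox{bd}\,\Delta(\calA)$ makes both the KKT identity and the cocoercivity inequality meaningful when $\pi_{k+1}$ lands on a face of the simplex.
\item Cocoercivity is the single step that converts the persistent dual gap into primal progress. The gap is at least $\eta\delta/2$ because $\lambda^{(k)}(a)=0$ while $\lambda^{(k)}(a^*)\ge 0$. A strong-convexity bound would only put $\|\pi_{k+1}-\pi_k\|^2$ on the right-hand side, which need not be bounded below.
\end{enumerate}
Your sign handling, $\langle\lambda^{(k)},\pi_{k+1}-\pi_k\rangle=-\langle\lambda^{(k)},\pi_k\rangle\le 0$, is right. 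So is the lower bound $d^{\pi_{k+1}}_s(s)\ge 1-\gamma$ in the performance difference lemma.

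Your argument also gives slightly more than existence of $T_0$. Since $V^{\pi_{k+1}}(s)\le V^*(s)$, Step 3 shows that $\pi_{k+1}\in\Pi^*$ as soon as two conditions hold: $k\ge K$, and $\|V^*-V^{\pi_k}\|_\infty<\eta\delta^2/(8L)$. The threshold $K$ is itself explicit. Because $Q^{\pi_k}\le Q^*$, it suffices that $\gamma\|V^*-V^{\pi_k}\|_\infty\le\delta/2$. Combined with the $O(1/k)$ rate in Proposition~\ref{pro:Xiao_Lan}, this yields an explicit upper bound on the termination time $T_0$.
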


\begin{proposition}[\citep{Li2026PMDPolicy}]
    Suppose that $h$ satisfies Assumption~\ref{ass:Legendreness} and is decomposable, i.e., $h$ can be written as $h(p) = \sum_{a\in\calA} \psi(p_a)$. Let $\{ \pi_k \}_{k \geq 0}$ be the policy sequence generated by PMD \textup(Eq~\eqref{eq:PMD}\textup) with any $\eta > 0$ and $\pi_0(\cdot|s) \in \big( \interior\, \dom \, h \big) \cap \Delta(\calA)$ for all $s\in\calS$. Then
    \begin{align}   \label{eq:policy-convergence}
        \pi_\infty := \lim_{k\to\infty} \, \pi_k \;\; \mbox{exists,} \quad \mbox{and } \; \pi_\infty \in \Pi^*.
    \end{align}
    That is, PMD converges in the policy domain.
\end{proposition}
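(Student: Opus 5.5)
The plan is to work state by state with the KKT identity of Lemma~\ref{lem:KKT}. In the decomposable case it decouples coordinatewise:
\[
\psi'(\pi_{k+1}(a|s)) = \psi'(\pi_k(a|s)) + \eta\, Q^{\pi_k}(s,a) + \lambda^{(k)}(a) + \nu^{(k)} .
\]
Subtracting this identity for two actions eliminates the normalization multiplier $\nu^{(k)}$. Define the \emph{potential differences}
\[
\delta_k(a,b) := \psi'(\pi_k(a|s)) - \psi'(\pi_k(b|s)).
\]
They obey a simple additive recursion driven by $\eta\,(Q^{\pi_k}(s,a)-Q^{\pi_k}(s,b))$ plus the complementarity terms. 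By Proposition~\ref{pro:Xiao_Lan}, $Q^{\pi_k}$ is monotone and converges to $Q^*$, so the drivers converge to $Q^*(s,a)-Q^*(s,b)$. The proof splits according to whether $a,b$ are optimal or suboptimal.

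\textbf{Step 1: suboptimal mass decays fast.} Let $\Delta_s := V^*(s) - \max_{a'\notin\calA^*_s} Q^*(s,a') > 0$. By monotone convergence of $Q^{\pi_k}$, there is $K$ such that for $k\ge K$ every optimal $a$ and suboptimal $a'$ satisfy $Q^{\pi_k}(s,a)-Q^{\pi_k}(s,a') \ge \Delta_s/2$. Pick an optimal $a_k$ whose probability is at least $1/|\calA|$. Then $\lambda^{(k)}(a_k)=0$ and $\psi'(\pi_k(a_k|s))$ lies in a bounded range $[\psi'(1/|\calA|),\psi'(1)]$, or stays bounded above under the Legendre assumptions on $\psi$. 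The recursion then gives
\[
\psi'(\pi_k(a'|s)) \le C - \tfrac{\eta\Delta_s}{2}(k-K),
\]
where the term $\lambda^{(k)}(a')\ge 0$ can only be active when $\pi_{k+1}(a'|s)=0$, which is harmless. Inverting the increasing map $\psi'$ shows $\pi_k(a'|s)$ is bounded by $(\psi')^{-1}$ evaluated along a linearly decreasing sequence. For the standard choices (e.g.\ $\psi(t)=t\log t$, Tsallis, Euclidean) this is summable in $k$: geometric decay, polynomial decay of high order, or finite termination. I would phrase the needed property as a consequence of the paper's Legendre-type assumptions on $\psi$.

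\textbf{Step 2: summable value errors.} Apply the performance difference lemma with $\pi^*$ as reference:
\[
V^*(s)-V^{\pi_k}(s) = \tfrac{1}{1-\gamma}\,\E_{s'\sim d^{\pi_k}_s}\Big[\sum_{a'\notin\calA^*_{s'}} \pi_k(a'|s')\big(V^*(s')-Q^*(s',a')\big)\Big].
\]
This is bounded by a constant times $\max_{s'}\sum_{a'\notin\calA^*_{s'}}\pi_k(a'|s')$, which is summable by Step 1. Hence $\sum_k \|Q^*-Q^{\pi_k}\|_\infty<\infty$, because $Q^{\pi}=r+\gamma PV^{\pi}$.

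\textbf{Step 3: convergence among optimal actions.} Take optimal $a,b\in\calA^*_s$. The driver in the recursion for $\delta_k(a,b)$ is
\[
\eta\,(Q^{\pi_k}(s,a)-Q^{\pi_k}(s,b)) = \eta\big[(Q^{\pi_k}-Q^*)(s,a)-(Q^{\pi_k}-Q^*)(s,b)\big],
\]
which is absolutely summable by Step 2. If $\psi'$ blows up at $0$, so that iterates stay interior and all $\lambda^{(k)}=0$, then $\delta_k(a,b)$ is Cauchy and converges. Otherwise the $\lambda$-terms act only at the boundary; there I would argue via one-sided bounds that $\limsup$ and $\liminf$ of $\delta_k(a,b)$ coincide, or that the coordinate is eventually pinned at $0$.

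Finally, the limits of all $\delta_k(a,b)$ together with the facts $\sum_{a\in\calA^*_s}\pi_k(a|s)\to 1$ and $\psi'$ strictly increasing determine a unique limit vector. Indeed, the map $c\mapsto \sum_{a\in\calA^*_s}(\psi')^{-1}(c+\ell_a)$ is strictly monotone, so the common offset is uniquely determined. This yields $\pi_\infty$ with support in $\calA^*_s$, i.e.\ $\pi_\infty\in\Pi^*$.

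\textbf{Main obstacle.} I expect the main obstacle to be Step 1's summability. It must be derived from the Legendre conditions on $\psi$ alone, which requires controlling $(\psi')^{-1}$ along $-\infty$. I would first try to avoid a rate altogether. The idea is to use the telescoping Bregman/three-point inequality from the proof of Proposition~\ref{pro:Xiao_Lan}, which yields $\sum_k (V^*-V^{\pi_{k+1}}) \lesssim D_h(\pi^*\|\pi_0)/\eta$ summed against the decreasing $D_h(\pi^*\|\pi_k)$. This already gives the summability of Step 2 directly, which would make Step 1 unnecessary. The boundary case with active $\lambda$'s among optimal actions is the secondary difficulty.
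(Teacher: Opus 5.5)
The paper only cites this result from \citet{Li2026PMDPolicy} and gives no proof, so I am judging your plan on its own. The architecture is sound: summable $Q$-errors, pairwise dual differences among optimal actions, and recovery of the common offset from normalization. Your fallback is correct and should replace Step~1 outright. The telescoped three-point argument behind Proposition~\ref{pro:Xiao_Lan} bounds $\sum_{j\le k}\big(V^*(s)-V^{\pi_j}(s)\big)$ uniformly in $k$ for each $s$; the $1/(k+1)$ rate is obtained from exactly this bound via monotonicity. Hence $\sum_k\|Q^*-Q^{\pi_k}\|_\infty<\infty$ with no rate on the suboptimal mass. That mass still tends to zero, because $V^*(s)-V^{\pi_k}(s)\ge\Delta_s\sum_{a'\notin\calA^*_s}\pi_k(a'|s)$. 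Step~1 as written would not have worked anyway, for two reasons. First, $a_k$ changes with $k$, so the recursion does not telescope against a fixed reference. Second, for Hellinger or Fermi--Dirac $\psi'(1)=+\infty$, so $\psi'(\pi_k(a_k|s))$ is unbounded when $\calA^*_s$ is a singleton.

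The genuine gap is Step~3 when $\psi'(0)$ is finite (Euclidean/PQA, Tsallis, Hellinger). This is exactly the case where optimal coordinates can hit zero, and ``one-sided bounds'' is not an argument. What you need is $\sum_k\lambda^{(k)}(a)<\infty$ for $a\in\calA^*_s$. The missing idea is a lower bound on the normalization multiplier: $\nu^{(k)}\ge-\eta\max_b Q^{\pi_k}(s,b)$. If this failed, every $\pi_{k+1}(a|s)$ would be at most $\pi_k(a|s)$, strictly whenever $\pi_{k+1}(a|s)>0$, so $\pi_{k+1}(\cdot|s)$ could not sum to one. Complementarity gives $\lambda^{(k)}(a)=\big[\psi'(0)-\psi'(\pi_k(a|s))-\eta Q^{\pi_k}(s,a)-\nu^{(k)}\big]_+$. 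Using $\psi'(\pi_k(a|s))\ge\psi'(0)$ and $\max_b Q^{\pi_k}(s,b)\le V^*(s)$, you get
\[
0\le\lambda^{(k)}(a)\le\eta\big(V^*(s)-Q^{\pi_k}(s,a)\big)\le\eta\|Q^*-Q^{\pi_k}\|_\infty,\qquad a\in\calA^*_s .
\]
Hence $|\delta_{k+1}(a,b)-\delta_k(a,b)|\le2\eta\|Q^*-Q^{\pi_k}\|_\infty$ in both cases, and $\delta_k(a,b)$ converges. Your offset argument then finishes the proof once you check the offset stays bounded. It is bounded below because $\psi'(\pi_k(b|s))\ge\psi'(0)$ in the boundary case, and because the optimal mass tends to one in the interior case. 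It is bounded above when $|\calA^*_s|\ge2$, since the optimal masses sum to at most one. The singleton case is immediate.
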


It is worth noting that, although the result in \citet{Lin2022PMD-policy-convergence} does not require decomposability, the regularity assumptions therein exclude several standard decomposable mirror maps that are covered in \citet{Li2026PMDPolicy}. In particular, negative Shannon entropy, the Hellinger mapping, and Fermi--Dirac entropy are not differentiable at certain boundary points of the simplex. Moreover, Tsallis entropy with $1<q<2$ is continuously differentiable but fails the cocoercivity condition for every finite $L$.


\section{Policy Convergence with Non-Decomposable Legendre Mirror Maps? A Counterexample}

\subsection{Proposed bandit problem}
The proposed three-armed bandit problem is described as follows:
\begin{equation}
\begin{gathered}  \label{eq:bandit}
    \calS = \{ s \}, \quad \calA = \{ a_1,\,  a_2, \, a_3 \}, \quad \gamma=0, \quad P(s \, | \, s, a_i) = 1, \; (i=1,2,3), \\
    r(s,a_1) = r(s,a_2) = 1, \quad r(s,a_3) = 0.
\end{gathered}
\end{equation}
For any policy $\pi$, 
\begin{align*}
    V^\pi(s) = \pi(a_1|s) + \pi(a_2|s), \quad Q^{\pi}(s,a_1) = Q^\pi(s,a_2) = 1, \quad Q^\pi(s,a_3) = 0.
\end{align*}
A policy is optimal if and only if $\pi(a_3|s) = 0$, and thus the optimal policy for  this bandit problem is not unique.

\subsection{Proposed mirror map}
For any $p \in \Delta(\calA)$ where $\calA = \{ a_1, a_2, a_3 \}$ is the action space of the bandit problem above, denote $p_i = p(a_i)$. The proposed mirror map instance $h: \, \mathbb{R}^3 \to \mathbb{R}$ is given by
\begin{equation}  \label{eq:counter-h}
\begin{aligned}
    h(p) &= p_3 \log p_3 + 100 p_3 ^ 2 - \sqrt{4-p_3^2} \\
    &\;\;\;\;- p_3^4 \sqrt{1-(p_1-p_2)^2} - p_3^4(p_1 - p_2) \cdot \frac{1}{4} \sin (\log (1-\log p_3)) + \frac{1}{2} (p_1 + p_2 + p_3)^2,
\end{aligned}
\end{equation}
with the domain (with $0\log 0=0$ and $p_3^4\sin(\log(1-\log p_3))$ defined to be zero at $p_3=0$)
\begin{align*}
    \dom\, h = \big\{ p \in \mathbb{R}^3 \; | \; 0 \leq p_3 \leq 2, \;\; |p_1 - p_2| \leq 1 \big\}.
\end{align*}
Clearly, the mirror map $h$ is non-decomposable and does not satisfy the regularity conditions in~\citet{Lin2022PMD-policy-convergence}, since $\nabla h(p)$ is not defined on the boundary face $p_3=0$. We next verify that $h$ satisfies Assumption~\ref{ass:Legendreness}.

\paragraph{Domain and continuity} It is direct to verify that $\dom\, h \supseteq \Delta(\calA)$, satisfying Assumption~\ref{ass:Legendreness}~\ref{ass:Legendreness-domain}. Furthermore, $h$ is proper and closed as it is continuous over its closed domain.

\paragraph{Gradient and essentially smoothness} Obviously, $h$ is differentiable over $\interior\, \dom \, h = \big\{ p\in\mathbb{R}^3 \, | \, 0 < p_3 < 2, \;\; |p_1-p_2| < 1 \big\}$. For any $p \in \interior\, \dom \, h$, the gradient $\nabla h(p) = \big[ \frac{\partial h}{\partial{p_1}}(p), \, \frac{\partial h}{\partial{p_2}}(p), \, \frac{\partial h}{\partial{p_3}}(p) \big]$ is given by
\begin{equation}
\begin{aligned}  \label{eq:gradients}
    \dfrac{\partial h}{\partial p_1}(p) &= H(p) + p_1 + p_2 + p_3, \\
    \dfrac{\partial h}{\partial p_2}(p) &= -H(p) + p_1 + p_2 + p_3, \\
    \dfrac{\partial h}{\partial p_3}(p) &= 1 + \log p_3 + 200 p_3 + \dfrac{p_3}{\sqrt{4-p_3^2}} - 4p_3^3\sqrt{1-(p_1 - p_2)^2} \\
    &  \;\;\;\; - (p_1 - p_2) p_3^3 \Bigg[ \sin \big( \log (1- \log p_3) \big) - \dfrac{\cos \big( \log( 1 - \log p_3) \big)}{4(1- \log p_3)} \Bigg] \\
    &  \;\;\;\; + p_1 + p_2 + p_3,
\end{aligned}
\end{equation}
where $H(p)$ is the potential function
\begin{align}  \label{eq:potential-function}
    H(p) := \dfrac{p_3^4(p_1 - p_2)}{\sqrt{1-(p_1 - p_2)^2}} - \frac{p_3^4}{4} \sin \big( \log (1 - \log p_3) \big).
\end{align}
Notice that the boundary of $\dom\, h$ is given by $\mbox{bd} \, \dom\, h = \big\{ p \in \dom\, h \, | \, p_3=0 \;\; \mbox{or} \;\; p_3 = 2 \;\; \mbox{or} \;\; |p_1-p_2| = 1 \big\}$. For every sequence $\{p_k\}\subseteq\operatorname{int}(\dom \,h)$ converging to a point in $\operatorname{bd}(\dom \,h)$, one has $\|\nabla h(p_k)\|_2\to+\infty$. Indeed, $\partial_{p_3}h(p_k)\to-\infty$ when $(p_k)_3\to0$, and $\partial_{p_3}h(p_k)\to+\infty$ when $(p_k)_3\to2$. At any remaining boundary point, $|(p_k)_1-(p_k)_2|\to1$ while $(p_k)_3$ converges to a point in $(0,2)$, so $|\partial_{p_1}h(p_k)-\partial_{p_2}h(p_k)|=2|H(p_k)|\to+\infty$.  Thus $h$ is essentially smooth, which satisfies Assumption~\ref{ass:Legendreness}~\ref{ass:Legendreness-smooth}. 

\paragraph{Convexity} By the Hessian matrix of $h$, one can verify that (see the appendix for details)
\begin{align*}
    \nabla^2 h(p) \succ 0, \quad \forall\, p \in \interior\, \dom\, h = \big\{ p \in \mathbb{R}^3 \, | \, 0 < p_3 < 2, \;\; |p_1 - p_2| < 1 \big\}.
\end{align*}
Thus, $h$ is strictly convex over $\interior\, \dom \, h$ and convex over $\dom\, h$. Together with the fact that $h$ is proper and closed, we conclude that $h$ satisfies Assumptions~\ref{ass:Legendreness}~\ref{ass:Legendreness-proper} and~\ref{ass:Legendreness}~\ref{ass:Legendreness-strictly-convex}. 

\subsection{Nonconvergence of the PMD Policy Sequence}
In the sequel, let $\{ \pi_k \}_{k \geq 0}$ be the policy sequence generated by PMD under aforementioned mirror map and bandit problem, with any given $\eta > 0$ and the initialization 
\begin{align}  \label{eq:init}
    \pi_0(a_3|s) = 0.2, \quad \pi_0(a_1 | s) = \frac{0.8 + x_0}{2}, \quad \pi_0(a_2|s) = \frac{0.8 - x_0}{2}, \quad x_0 = \frac{\sin(\log ( 1 - \log 0.2))}{\sqrt{16 + \big( \sin(\log ( 1 - \log 0.2)) \big)^2}}.
\end{align}
Additionally, as $\pi_k(\cdot|s) \in \interior\, \dom \, h$ (see Proposition~\ref{pro:Xiao_Lan}), we have $\pi_k(a_3|s) > 0$ for all $k \geq 0$.
\begin{lemma}  \label{lem:potential}
    Recall the potential function $H(p)$. Under the initialization in Eq~\eqref{eq:init}, we have
    \begin{align}  \label{eq:potential}
        \forall\, k \geq 0: \quad H(\pi_k(\cdot|s)) = 0.
    \end{align}
   Moreover, there holds
    \begin{align}  \label{eq:policy-expression}
        \forall\, k \geq 0: \quad \pi_k(a_1 |s) = \frac{(1-\pi_k(a_3|s)) + u_k}{2}, \quad \pi_k(a_2|s) = \frac{(1 - \pi_k(a_3|s)) - u_k}{2},
    \end{align}
    where 
    \begin{align*}
        u_k = \frac{\sin(\log(1-\log \pi_k(a_3|s)))}{\sqrt{16+\sin^2(\log(1-\log \pi_k(a_3|s)))}} \in \ls[ -\frac{1}{\sqrt{17}}, \; \frac{1}{\sqrt{17}} \rs].
    \end{align*}
\end{lemma}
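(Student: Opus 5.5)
We argue by induction on $k$, using the KKT characterization in Lemma~\ref{lem:KKT}. Because every iterate lies in the interior of the domain, $\pi_{k+1}(a_i|s) > 0$ for all $i$ (in particular $\pi_{k+1}(a_3|s)>0$ by Proposition~\ref{pro:Xiao_Lan}). If some $\pi_{k+1}(a_1|s)$ or $\pi_{k+1}(a_2|s)$ were zero, we could not drop the multiplier directly, so we instead use only the difference of the first two KKT equations. By Eq~\eqref{eq:gradients}, $\partial_{p_1}h - \partial_{p_2}h = 2H(p)$. Subtracting the $a_2$ equation from the $a_1$ equation in Lemma~\ref{lem:KKT}, and using $Q^{\pi_k}(s,a_1)=Q^{\pi_k}(s,a_2)=1$, gives
\begin{align*}
2H(\pi_{k+1}(\cdot|s)) = 2H(\pi_k(\cdot|s)) + \lambda^{(k)}(a_1) - \lambda^{(k)}(a_2).
\end{align*}

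The first step is to show that both multipliers vanish. We claim $\pi_{k+1}(a_1|s),\pi_{k+1}(a_2|s)>0$. Suppose instead that $\pi_{k+1}(a_2|s)=0$. Then $p_1-p_2 = 1-p_3 <1$, which is still an interior point of $\dom h$; zero entries in $a_1$ or $a_2$ are allowed. We then rule it out by a sign argument. In that case $\lambda^{(k)}(a_1)=0$ and $\lambda^{(k)}(a_2)\ge0$, so $H(\pi_{k+1})\le H(\pi_k)$. We compare this with the explicit size of $H$ at such a point. It is cleaner to argue that the maximizer of a strictly concave problem over the simplex can be compared against its restriction to the face. Alternatively, we verify directly that the candidate point satisfying $H=0$ with positive entries satisfies all KKT conditions with $\lambda^{(k)}=0$. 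KKT conditions are sufficient for this convex problem, and the maximizer is unique by strict convexity, so that candidate is the iterate. This \emph{construct-and-verify} route is the one we adopt, and it avoids the case analysis.

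Concretely, the second step is the following. Assume $H(\pi_k)=0$. For $t\in(0,1)$, set $p(t)$ to be the point with $p_3=t$ and $p_1-p_2=u(t)$, where $u(t)=\sin(\log(1-\log t))/\sqrt{16+\sin^2(\log(1-\log t))}$ and $p_1+p_2=1-t$. Solving $H=0$, i.e. $\frac{x}{\sqrt{1-x^2}}=\frac14\sin(\cdot)$, gives exactly $x=u(t)$, since $x/\sqrt{1-x^2}=\sigma/4$ implies $x^2(16+\sigma^2)=\sigma^2$ with matching sign. Also $|u|\le 1/\sqrt{17}$, so $p_1,p_2\ge (1-t-1/\sqrt{17})/2$. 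Positivity thus needs $t<1-1/\sqrt{17}$, and here we use monotonicity: $\pi_k(a_3|s)$ is nonincreasing because $V^{\pi_k}$ is nondecreasing, so $t\le0.2$ throughout. On the curve $p(t)$ we have $\partial_1 h=\partial_2 h$. The KKT system with $\lambda=0$ then reduces to two scalar equations,
\begin{align*}
\partial_1h(p(t)) &= \partial_1h(\pi_k)+\eta+\nu,\\
\partial_3h(p(t)) &= \partial_3h(\pi_k)+\nu.
\end{align*}
Eliminating $\nu$, this becomes the single equation $g(t):=\partial_1h(p(t))-\partial_3h(p(t)) = c_k+\eta$, where $c_k = \partial_1 h(\pi_k) - \partial_3 h(\pi_k)$. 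Here $\partial_1h(p(t)) = 1$ on the curve, since $p_1+p_2+p_3=1$ and $H=0$. So $g(t)=1-\partial_3h(p(t))$. As $t\to0^+$ we have $g\to+\infty$, by the $\log p_3$ term, and at $t=\pi_k(a_3|s)$ we have $g=c_k<c_k+\eta$. By continuity there is a root $t\in(0,\pi_k(a_3|s))$, and the corresponding $p(t)$ satisfies KKT. By uniqueness it equals $\pi_{k+1}$, which gives $H(\pi_{k+1})=0$. The base case is exactly the initialization Eq~\eqref{eq:init}, with $x_0=u(0.2)$. Eq~\eqref{eq:policy-expression} then follows from $H=0$ together with the simplex constraint, as shown above.

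The main obstacle is the existence step: we must make sure the root lies in the range where $p(t)$ has positive coordinates. The intermediate value argument on $(0,\pi_k(a_3|s))$ handles this without needing monotonicity of $g$, since uniqueness comes from strict convexity of $h$, i.e. from Assumption~\ref{ass:Legendreness}.
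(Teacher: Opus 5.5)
Your construct-and-verify argument is correct, but it takes a different route from the paper's.

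The paper works only with the \emph{necessary} KKT conditions of whatever maximizer PMD returns. Subtracting the $a_1$ and $a_2$ equations gives $H(\pi_{k+1}(\cdot|s)) = H(\pi_k(\cdot|s)) + (\lambda_1^{(k)}-\lambda_2^{(k)})/2$, and a positive multiplier is then excluded by a sign check. If, say, $\lambda_1^{(k)}>0$, then $\pi_{k+1}(a_1|s)=0$ and $\lambda_2^{(k)}=0$. Since $\pi_{k+1}(a_3|s)\le 0.2$ by monotonicity of $V^{\pi_k}$, this gives $\pi_{k+1}(a_2|s)\ge 0.8$, which forces $H(\pi_{k+1}(\cdot|s))<0$ and contradicts $H(\pi_{k+1}(\cdot|s))=\lambda_1^{(k)}/2>0$. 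This is exactly the sign argument you began and then abandoned; in your case $p_1-p_2\ge 0.8$ gives $H>0$ at once. It needs neither KKT sufficiency nor uniqueness of the maximizer.

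You instead do the following:
\begin{itemize}
\item build the candidate on the invariant curve $\{H=0\}\cap\Delta(\calA)$;
\item reduce the PMD step to the scalar equation $\partial_{p_3}h(p(t))=\partial_{p_3}h(\pi_k(\cdot|s))-\eta$ and solve it by the intermediate value theorem;
\item identify the solution with $\pi_{k+1}$ via first-order sufficiency and uniqueness.
\end{itemize}

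What your route buys is that, in one stroke, you get $\nu^{(k)}=-\eta$, the recursion used later in Lemma~\ref{lem:phase}, and $\pi_{k+1}(a_3|s)<\pi_k(a_3|s)$ directly from the construction. So your appeal to Proposition~\ref{pro:Xiao_Lan} to get $t\le 0.2$ is in fact unnecessary.

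The price is that uniqueness needs a short extra justification. The map $h$ is strictly convex only on $\interior\,\dom\,h$, while $\Delta(\calA)$ touches the face $p_3=0$. Your candidate is interior, so the half-open segment from it to any other feasible point lies in $\interior\,\dom\,h$. That yields the strict gradient inequality, hence uniqueness.

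Finally, drop your opening claim that interiority gives $\pi_{k+1}(a_i|s)>0$ for all $i$. Membership in $\interior\,\dom\,h$ only forces $p_3>0$, as you note yourself a few lines later, and your final argument never uses the claim.
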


\begin{proof}
    We first prove Eq~\eqref{eq:potential} by induction. For the case of $k=0$, plugging the initialization (Eq~\eqref{eq:init}) into the definition of the potential function $H(p)$ (Eq~\eqref{eq:counter-h}) provides that $H(\pi_0(\cdot|s)) = 0$. Now suppose that $H(\pi_k(\cdot|s)) = 0$ for some $k \geq 0$. By Lemma~\ref{lem:KKT}, there exist a scalar $\nu^{(k)}$ and a dual vector $\lambda^{(k)} \in \mathbb{R}^{3}$ such that
    \begin{align}  \label{eq:KKT}
        \forall\, k \geq 0, \; \forall\, i = 1, 2, 3: \quad \frac{\partial h}{\partial p_i} (\pi_{k+1}(\cdot|s)) = \frac{\partial h}{\partial p_i} (\pi_k(\cdot|s)) + \eta \, Q^{\pi_k}(s,a_i) + \lambda_i^{(k)} + \nu^{(k)},
    \end{align}
    where $\lambda_i^{(k)}$ satisfies $\lambda_i^{(k)} \geq 0$ and $\lambda_i^{(k)} \cdot \pi_{k+1}(a_i|s) = 0$. For $i=1,2$, plugging Eq~\eqref{eq:gradients} into Eq~\eqref{eq:KKT} yields
    \begin{equation}  \label{eq:potential-equations}
    \begin{aligned}
        \begin{cases}
            H(\pi_{k+1}(\cdot|s)) = H(\pi_k(\cdot|s)) + \eta + \lambda_1^{(k)} + \nu^{(k)}, \\[.5em]
            H(\pi_{k+1}(\cdot|s)) = H(\pi_k(\cdot|s)) - \eta - \lambda_2^{(k)} - \nu^{(k)}.
        \end{cases}
    \end{aligned}
    \end{equation}
    Thus we obtain that
    \begin{align}  \label{eq:potential-dynamic}
        \forall\, k \geq 0: \quad H(\pi_{k+1}(\cdot|s)) = H(\pi_k(\cdot|s)) + \frac{\lambda_1^{(k)} - \lambda_2^{(k)}}{2}.
    \end{align}
    Then it remains to show that $\lambda_1^{(k)} = \lambda_2 ^{(k)} = 0$. Recall that $\pi_0(a_3|s) = 0.2$. By the monotonicity property of PMD (Eq~\eqref{eq:PMD-monotonicity-V} in Proposition~\ref{pro:Xiao_Lan}), there holds $\pi_k(a_3|s) \leq 0.2$, and thus $\pi_k(a_1|s) + \pi_k(a_2|s) \geq 0.8$. Now suppose that $\lambda_1^{(k)}>0$. In this case, we have $\pi_{k+1}(a_1|s) = 0$, thus $\pi_{k+1}(a_2|s) \geq 0.8$ and $\lambda_2^{(k)} = 0$. On the other hand, by the definition of $H$, there holds
    \begin{align*}
        H(\pi_{k+1}(\cdot|s)) &= \underbrace{\big(\pi_{k+1}(a_3|s)\big)^4}_{ > 0} \cdot \Bigg\{ \underbrace{\frac{-\pi_{k+1}(a_2|s)}{\sqrt{1-(\pi_{k+1}(a_2|s))^2}}}_{\leq -0.8 / \sqrt{0.36} =-4/3} - \frac{1}{4} \underbrace{\sin \big( \log (1-\log \pi_{k+1}(a_3|s)) \big)}_{\geq -1} \Bigg\}  < 0.
    \end{align*}
    This contradicts $H(\pi_{k+1}(\cdot|s)) = H(\pi_k(\cdot|s)) + \lambda_1^{(k)}/2 > H(\pi_k(\cdot|s)) = 0$ given by Eq~\eqref{eq:potential-dynamic}, thus $\lambda_1^{(k)} = 0$. Following the same argument, one can similarly show that $\lambda_2^{(k)} = 0$ via contradiction. Therefore, we conclude that $\lambda_1^{(k)} = \lambda_2 ^{(k)} = 0$ and hence $H(\pi_{k+1}(\cdot|s)) = 0$, which proves Eq~\eqref{eq:potential} by induction.

    Now we prove Eq~\eqref{eq:policy-expression}. Note that $\pi_k(a_3|s) > 0$. By $H(\pi_k(\cdot|s)) = 0$, we obtain
    \begin{alignat*}{2}
        &&\frac{\pi_k(a_1|s) - \pi_k(a_2|s)}{\sqrt{1-(\pi_k(a_1|s) - \pi_k(a_2|s))^2}} &= \frac{1}{4} \sin \ls( \log(1-\log \pi_k(a_3|s)) \rs) \\
        \Longrightarrow \quad \quad && \pi_k(a_1|s) - \pi_k(a_2|s) &= \frac{\sin(\log(1 - \log \pi_k(a_3\vert{}s)))}{\sqrt{16 + \sin^2(\log(1 - \log \pi_k(a_3\vert{}s)))}} =: u_k
    \end{alignat*}
    Together with $\pi_k(a_1|s) + \pi_k(a_2|s) = 1-\pi_k(a_3|s)$ we get the desired expression in Eq~\eqref{eq:policy-expression}.
\end{proof}

\begin{lemma}  \label{lem:phase}
    Under the initialization in Eq~\eqref{eq:init}, there exists a finite time $T_1 > 0$ such that
    \begin{align}  \label{eq:suboptimal-decay}
        \forall\, k \geq T_1: \quad \eta / 2 \leq \log \pi_{k}(a_3|s) - \log \pi_{k+1}(a_3|s) \leq 3\eta / 2.
    \end{align}
    Let $\theta_k := \log (1 - \log \pi_k(a_3|s))$. Eq~\eqref{eq:suboptimal-decay} implies that
    \begin{align}  \label{eq:phase-limits}
        \lim_{k \to \infty} \theta_k = +\infty, \quad \lim_{k\to\infty} ( \theta_{k+1} - \theta_k )= 0.
    \end{align}
\end{lemma}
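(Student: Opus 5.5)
The plan is to use the $p_3$-component of the KKT relation in Lemma~\ref{lem:KKT}, together with the structure obtained in Lemma~\ref{lem:potential}. By Lemma~\ref{lem:potential}, $\lambda_1^{(k)}=\lambda_2^{(k)}=0$. Since $\pi_{k+1}(a_3|s)>0$, complementary slackness also gives $\lambda_3^{(k)}=0$. Write $\partial_1 h=H+S$ with $S:=p_1+p_2+p_3=1$ on the simplex, and use $H(\pi_k)=H(\pi_{k+1})=0$. The $i=1$ equation then yields $\nu^{(k)}=-\eta$. Substituting into the $i=3$ equation with $Q(s,a_3)=0$ gives
\begin{align*}
\partial_{3}h(\pi_{k+1}(\cdot|s))=\partial_{3}h(\pi_k(\cdot|s))-\eta .
\end{align*}
Writing $y_k:=\pi_k(a_3|s)$, the gradient formula \eqref{eq:gradients} then splits as
\begin{align*}
\partial_3 h=\log y_k+R(y_k,u_k)+2,
\end{align*}
where $R$ collects $200y_k$, $y_k/\sqrt{4-y_k^2}$, $-4y_k^3\sqrt{1-u_k^2}$ and the trigonometric term. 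This yields
\begin{align*}
\log y_k-\log y_{k+1}=\eta+R(y_{k+1},u_{k+1})-R(y_k,u_k).
\end{align*}

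\textbf{Show $y_k\to0$.} By Proposition~\ref{pro:Xiao_Lan}, $V^*-V^{\pi_k}=y_k\to0$. Each summand of $R$ is bounded by $C y$ for an absolute constant $C$ when $y\le 0.2$. This uses $|u|\le1/\sqrt{17}$ and the bound $|\sin|,|\cos|/(4(1-\log y))\le 1$ for $y\le1$. Hence $R(y,u)\to0$ uniformly as $y\to0$. Choose $T_1$ with $|R(y_k,u_k)|\le\eta/4$ for all $k\ge T_1$. The difference of $R$ terms is then at most $\eta/2$ in absolute value, which gives \eqref{eq:suboptimal-decay}.

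\textbf{Derive \eqref{eq:phase-limits}.} From the lower bound, $-\log y_k\ge -\log y_{T_1}+(k-T_1)\eta/2\to\infty$, so $\theta_k\to+\infty$. Set $L_k:=1-\log y_k\ge1$. Then
\begin{align*}
\theta_{k+1}-\theta_k=\log\bigl(1+(L_{k+1}-L_k)/L_k\bigr)\in\bigl[0,\log(1+3\eta/(2L_k))\bigr],
\end{align*}
and this tends to $0$ because $L_k\to\infty$.

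The main obstacle is the bookkeeping in the first step. The multiplier $\nu^{(k)}$ must be eliminated cleanly, which relies on $H=0$ at both iterates and on all $\lambda$'s vanishing. The uniform smallness of $R$ near $y=0$ also needs care, especially the trigonometric term with its $1/(1-\log y)$ factor. That factor is harmless because of the $y^3$ prefactor, but this should be checked explicitly.
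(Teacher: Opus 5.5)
Your proposal is correct and follows essentially the same route as the paper. You eliminate the multipliers to get $\partial_3 h(\pi_{k+1}(\cdot|s))=\partial_3 h(\pi_k(\cdot|s))-\eta$, isolate $\log \pi_k(a_3|s)$ from a remainder that vanishes as $\pi_k(a_3|s)\to0$, pick $T_1$ so that remainder is at most $\eta/4$, and bound $\theta_{k+1}-\theta_k$ by $\log(1+3\eta/(2L_k))$. The only cosmetic differences are that you get $\nu^{(k)}=-\eta$ from the $i=1$ equation alone rather than by adding the two potential equations, and $\theta_k\to+\infty$ from the telescoped lower bound rather than directly from $\pi_k(a_3|s)\to0$.
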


\begin{proof}
    By Eq~\eqref{eq:potential-equations} there holds \[\nu^{(k)} = -\eta - \frac{(\lambda_1^{(k)} + \lambda_2^{(k)})}{2} = -\eta,\] 
    as $\lambda_1^{(k)} = \lambda_2^{(k)} = 0$. Noticing that $\pi_{k+1}(a_3|s) > 0$, we have $\lambda_3^{(k)} = 0$ and thus by Eq~\eqref{eq:KKT}
    \begin{align}  \label{eq:suboptimal-policy-dynamic}
        \forall\, k \geq 0: \quad \frac{\partial h}{\partial p_3} (\pi_{k+1}(a_3|s)) = \frac{\partial h}{\partial p_3} (\pi_k(a_3|s)) - \eta.
    \end{align}
    Define the remainder term
    \begin{align*}
        R_k &:= 200 \pi_k(a_3|s) + \frac{\pi_k(a_3|s)}{\sqrt{4- \pi_k(a_3|s)^2}} - 4 \big(\pi_k(a_3|s)\big)^3 \sqrt{1-\big( \pi_k(a_1|s) - \pi_k(a_2|s) \big)^2} \\
        &\;\;\;\;- \big( \pi_k(a_1|s) - \pi_k(a_2|s) \big) \big( \pi_k(a_3|s) \big)^3 \ls[ \sin (\log (1 - \log \pi_k(a_3|s))) - \frac{\cos (\log (1- \log \pi_k(a_3|s)))}{4 (1 - \log \pi_k(a_3|s))} \rs].
    \end{align*}
    Plugging Eq~\eqref{eq:gradients} into Eq~\eqref{eq:suboptimal-policy-dynamic} yields that
    \begin{align*}
        \log \pi_{k+1}(a_3|s) + R_{k+1} = \log\pi_k(a_3|s) + R_{k} - \eta.
    \end{align*}
    Notice that $\pi_k(a_3|s) \to 0$ by the global convergence of PMD (Proposition~\ref{pro:Xiao_Lan}), and $R_k \to 0$ as $\pi_k(a_3|s) \to 0$. Therefore, for any fixed step size $\eta > 0$, there exists a finite time $T_1 > 0$ such that $-\eta /4 \leq R_k \leq \eta /4$ for all $k \geq T_1$. Thus
    \begin{align*}
        \forall\, k \geq T_1 : \quad \eta / 2 \leq \log\pi_k(a_3|s) - \log \pi_{k+1}(a_3|s) \leq 3\eta / 2.
    \end{align*} 
    As $\pi_k(a_3|s) \to 0$, there holds $\theta_k = \log (1 - \log \pi_k(a_3|s)) \to +\infty$. Furthermore,
    \begin{align*}
        \theta_{k+1} - \theta_k = \log \ls( \frac{1-\log \pi_{k+1}(a_3|s)}{1 - \log \pi_k(a_3|s)} \rs) \in \ls[ \underbrace{\log \ls( 1 + \frac{\eta/2}{1-\log \pi_k(a_3|s)} \rs)}_{\to 0}, \; \underbrace{\log \ls( 1 + \frac{3\eta/2}{1-\log \pi_k(a_3|s)} \rs)}_{\to 0} \rs],
    \end{align*}
    implying that $\theta_{k+1} - \theta_k \to 0$.
\end{proof}

\begin{lemma}\label{lem:periodic_density}
Let $f:\mathbb R\to\mathbb R$ be a continuous periodic function with period $T>0$, and let $\{\theta_k\}_{k\geq1}$ satisfy
\[
\theta_k\to+\infty,\qquad |\theta_{k+1}-\theta_k|\to0.
\]
Then for any fixed $a \in [0, T)$, there exists a subsequence $\{\theta_{k_m}\}_{m \geq 0}$ such that
\[
\lim_{m \to \infty} f(\theta_{k_m}) = f(a).
\]
Equivalently, $\{f(\theta_k)\}_{k\geq1}$ is dense in the range of $f$.
\end{lemma}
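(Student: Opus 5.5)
Because $f$ is continuous, it suffices to show that the sequence $\{\theta_k\}$ enters every small neighborhood of the translates $a + nT$, $n\in\mathbb{Z}$, along a subsequence. Concretely, I aim to construct indices $k_1<k_2<\cdots$ such that
\[
\big|\theta_{k_m} - (a + n_m T)\big| \leq \epsilon_m
\]
for some integers $n_m$ and some $\epsilon_m \to 0$. Periodicity then gives $f(\theta_{k_m}) = f(\theta_{k_m} - n_m T)$. Since $\theta_{k_m} - n_m T \to a$ and $f$ is continuous at $a$, we get $f(\theta_{k_m}) \to f(a)$.

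The key step is a discrete intermediate-value argument. I would fix $m\ge1$ and use $|\theta_{k+1}-\theta_k|\to0$ to choose $K_m$ such that $|\theta_{k+1}-\theta_k|\le 1/m$ for all $k\ge K_m$. I would also enlarge $K_m$ beyond $k_{m-1}$, so that the indices increase. Next, since $\theta_k\to+\infty$, I pick an integer $n_m$ with $a+n_mT > \theta_{K_m}$. Then I let $k_m$ be the first index $k\ge K_m$ with $\theta_k \ge a+n_mT$; this index exists because $\theta_k\to+\infty$. By minimality, $\theta_{k_m-1} < a+n_mT \le \theta_{k_m}$, and $k_m-1\ge K_m$ because $\theta_{K_m}< a+n_mT$. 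Therefore
\[
0\le \theta_{k_m}-(a+n_mT) \le \theta_{k_m}-\theta_{k_m-1}\le 1/m .
\]
This is the required approximation with $\epsilon_m = 1/m$.

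The main point needing care is exactly this crossing step. The sequence need not be monotone, so I cannot simply assume it passes near $a+n_mT$. The first-crossing index after $K_m$ handles this, since it only uses eventual small increments together with divergence to $+\infty$. Everything else is routine.

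For the "equivalently" clause, I would note that the range of $f$ equals $f([0,T))$ by periodicity. Every value $f(a)$ in the range is therefore a limit point of $\{f(\theta_k)\}$, which means $\{f(\theta_k)\}$ is dense in the range. Since the range is the compact interval $f([0,T])$, the closure of $\{f(\theta_k)\}$ is exactly this interval. In the application, one takes $f=\sin$, $T=2\pi$, and $\theta_k$ from Lemma~\ref{lem:phase}. Then $u_k$ in Lemma~\ref{lem:potential} accumulates at both $\pm 1/\sqrt{17}$, so $\pi_k$ does not converge.
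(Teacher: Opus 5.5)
Your proof is correct and is essentially the paper's argument: both take the first index at which $\theta_k$ reaches a level $a+nT$ and bound the overshoot by the last increment $\theta_{k_m}-\theta_{k_m-1}$. Your extra threshold $K_m$, which forces $k_m>k_{m-1}$ and gives an explicit $1/m$ overshoot, only spells out what the paper leaves implicit when it asserts $k_m\to\infty$ and then uses $|\theta_{k_m}-\theta_{k_m-1}|\to0$.
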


\begin{proof}
Fix any $a\in[0,T)$. For every sufficiently large integer $m$, define
\[
k_m:=\min\{k\geq1: \;\;\theta_k\geq mT+a\}.
\]
This index is well defined because $\theta_k\to+\infty$. Moreover, $k_m\to\infty$, and, by the minimality of $k_m$,
\[
\theta_{k_m-1}<mT+a\leq\theta_{k_m}.
\]
It follows that
\[
0\leq\theta_{k_m}-(mT+a)\leq\theta_{k_m}-\theta_{k_m-1}\leq|\theta_{k_m}-\theta_{k_m-1}|\to0.
\]
Hence $\theta_{k_m}-mT\to a$. By the periodicity and continuity of $f$,
\[
f(\theta_{k_m})=f(\theta_{k_m}-mT)\to f(a).
\]
\end{proof}

\begin{theorem}
    Consider the bandit problem described in Eq~\eqref{eq:bandit} and the mirror map $h$ defined in Eq~\eqref{eq:counter-h}. Setting the initialization in Eq~\eqref{eq:init}, the policy sequence generated by PMD, denoted $\{ \pi_k \}_{k \geq 0}$, does not converge to any fixed optimal policy. Specifically, for any $y \in [-1/\sqrt{17}, \; 1/\sqrt{17}]$, there exists a subsequence $\{ \pi_{k_m} \}_{m \geq 0}$ such that
    \begin{align*}
        \lim_{m \to \infty} \pi_{k_m}(\cdot|s) = \left [ \begin{array}{c}
             (1+y) / 2  \\
             (1-y) / 2  \\
             0
        \end{array}
        \right ].
    \end{align*}
\end{theorem}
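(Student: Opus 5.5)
The proof will combine Lemmas~\ref{lem:potential}, \ref{lem:phase} and \ref{lem:periodic_density}, applied to the function
\[
f(\theta) := \frac{\sin\theta}{\sqrt{16+\sin^2\theta}}.
\]
This $f$ is continuous and $2\pi$-periodic. By Lemma~\ref{lem:potential}, $u_k = f(\theta_k)$, where $\theta_k = \log(1-\log \pi_k(a_3|s))$ as in Lemma~\ref{lem:phase}. The strategy is to show that every $y$ in the range of $f$ is a subsequential limit of $u_k$. Convergence of $\pi_k(a_3|s)\to 0$ then pins down the rest of the limit vector.

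First, compute the range of $f$. The map $t\mapsto t/\sqrt{16+t^2}$ is continuous and strictly increasing on $[-1,1]$. Since $\sin\theta$ sweeps all of $[-1,1]$ over one period, the range of $f$ is exactly $[-1/\sqrt{17},\,1/\sqrt{17}]$. Hence, for any given $y$ in this interval, choose $a\in[0,2\pi)$ with $\sin a = 4y/\sqrt{1-y^2}$. This is admissible because $|y|\le 1/\sqrt{17}$ gives $|4y/\sqrt{1-y^2}|\le 1$, and with this choice $f(a)=y$.

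Second, Lemma~\ref{lem:phase} provides $\theta_k\to+\infty$ and $\theta_{k+1}-\theta_k\to0$. These are exactly the hypotheses of Lemma~\ref{lem:periodic_density} with $T=2\pi$. That lemma yields a subsequence $k_m\to\infty$ with $u_{k_m}=f(\theta_{k_m})\to f(a)=y$.

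Third, Proposition~\ref{pro:Xiao_Lan} gives value convergence, and in this bandit $V^{\pi_k}(s) = 1-\pi_k(a_3|s)$. Thus $\pi_{k_m}(a_3|s)\to 0$. Substituting into the expression \eqref{eq:policy-expression} from Lemma~\ref{lem:potential} gives $\pi_{k_m}(a_1|s)\to(1+y)/2$ and $\pi_{k_m}(a_2|s)\to(1-y)/2$, which is the claimed subsequential limit.

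Finally, non-convergence follows by taking $y=1/\sqrt{17}$ and $y=-1/\sqrt{17}$. This produces two subsequences with distinct limits, so $\{\pi_k\}$ cannot converge to any single policy, optimal or otherwise.

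Essentially all the analytic work is already contained in the three lemmas. The only point needing care is the first step: confirming that the interval $[-1/\sqrt{17},1/\sqrt{17}]$ is exactly the range of $f$, so that a preimage $a$ exists for every $y$ in it. That reduces to the elementary monotonicity and bound check above, so no step is expected to be a real obstacle.
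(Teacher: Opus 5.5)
Your proposal is correct and follows the paper's proof essentially step for step: write $u_k=f(\theta_k)$ via Lemma~\ref{lem:potential}, feed the conclusions of Lemma~\ref{lem:phase} into Lemma~\ref{lem:periodic_density} with $T=2\pi$, and use $\pi_k(a_3|s)\to 0$. Your explicit preimage $\sin a = 4y/\sqrt{1-y^2}$ and your two-limit argument with $y=\pm 1/\sqrt{17}$ fill in details the paper leaves implicit: the paper only asserts that such an $a$ exists and does not spell out the non-convergence step.
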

\begin{proof}
    By Lemma~\ref{lem:potential},
    \begin{align*}
        \pi_k(a_1 |s) = \frac{(1-\pi_k(a_3|s)) + u_k}{2}, \quad \pi_k(a_2|s) = \frac{(1 - \pi_k(a_3|s)) - u_k}{2},
    \end{align*}
    where
    \begin{align*}
        u_k = \frac{\sin (\theta_k)}{\sqrt{16 + \sin^2(\theta_k)}}, \quad \theta_k = \log (1 - \log \pi_k(a_3|s)).
    \end{align*}
    Let $f(\theta) = {\sin (\theta)}\big /{\sqrt{16 + \sin^2(\theta)}}$. By Lemmas~\ref{lem:phase} and~\ref{lem:periodic_density}, for any fixed $y \in [-1/\sqrt{17}, \; 1/\sqrt{17}]$, there exist an $a \in [0, 2\pi)$ such that $f(a) = y$ and a subsequence $\{ \theta_{k_m} \}_{m \geq 0}$ satisfying $f(\theta_{k_m}) \to y$. Together with $\pi_k(a_3|s) \to 0$, we obtain
    \begin{align*}
        \lim_{m \to \infty} \pi_{k_m}(a_1|s) = \lim_{m \to \infty} \frac{1 - \pi_{k_m}(a_3|s) + f(\theta_{k_m})}{2} = \frac{1+y}{2}, \quad \lim_{m\to\infty} \pi_{k_m}(a_2|s) = \frac{1-y}{2},
    \end{align*}
    which completes the proof.
\end{proof}

\section{Conclusion}
This note constructs a Legendre but non-decomposable mirror map and a three-armed bandit problem for which PMD fails to converge in the policy domain under any positive constant step size and a suitable initialization. Although the value sequence converges to the optimal value, the policy  sequence has a nontrivial continuum of optimal accumulation points and therefore does not converge in the policy domain. This counterexample shows that the Legendre conditions alone are insufficient to guarantee policy convergence of PMD, and that additional structural conditions, such as decomposability, are needed for a general policy convergence guarantee.

\section*{Declaration of Generative AI Assistance}

The authors have reviewed and verified all mathematical arguments and results, reorganized the exposition, and provided additional explanations. They take full responsibility for the content of this note.

\bibliographystyle{plainnat}
\bibliography{refs}

\appendix
\section*{Appendix: Computing the Hessian $\nabla^2 h$}
  \label{sec:app:Hessian}
Let $h_{ij} := \frac{\partial^2 h}{\partial p_i \partial p_j} (p)$. Direct computation yields
\begin{align*}
    h_{11}=h_{22}&=1+\frac{p_3^4}{[1-(p_1-p_2)^2]^{3/2}}, \qquad h_{12}=1-\frac{p_3^4}{[1-(p_1-p_2)^2]^{3/2}},\\
    h_{13}&=1+\frac{4p_3^3(p_1-p_2)}{\sqrt{1-(p_1-p_2)^2}}-p_3^3\sin\!\bigl(\log(1-\log p_3)\bigr)+\frac{p_3^3\cos\!\bigl(\log(1-\log p_3)\bigr)}{4(1-\log p_3)},\\
    h_{23}&=1-\frac{4p_3^3(p_1-p_2)}{\sqrt{1-(p_1-p_2)^2}}+p_3^3\sin\!\bigl(\log(1-\log p_3)\bigr)-\frac{p_3^3\cos\!\bigl(\log(1-\log p_3)\bigr)}{4(1-\log p_3)},\\
    h_{33}&=1+\frac{1}{p_3}+200+\frac{4}{(4-p_3^2)^{3/2}}-12p_3^2\sqrt{1-(p_1-p_2)^2}\\
    &\quad -(p_1-p_2)p_3^2\Bigg[3\sin\!\bigl(\log(1-\log p_3)\bigr)-\frac{7\cos\!\bigl(\log(1-\log p_3)\bigr)}{4(1-\log p_3)} \\
    &\quad \quad-\frac{\sin\!\bigl(\log(1-\log p_3)\bigr)+\cos\!\bigl(\log(1-\log p_3)\bigr)}{4(1-\log p_3)^2}\Bigg],
\end{align*}
where $p \in \interior\, \dom \, h = \big\{ p\in\mathbb{R}^3 \, | \, 0 < p_3 < 2, \;\; |p_1 - p_2| < 1 \big\}$. We show the positive definiteness of $\nabla^2 h$ by verifying that all leading principal minors are positive. For the first two leading principal minors,
\[
\Delta_1=h_{11}=1+\frac{p_3^4}{[1-(p_1-p_2)^2]^{3/2}}>0,
\]
and
\[
\Delta_2=h_{11}h_{22}-h_{12}^2=\frac{4p_3^4}{[1-(p_1-p_2)^2]^{3/2}}>0.
\]
Since $h_{23}=2-h_{13}$, direct calculation provides that
\[
\Delta_3=\det\!\bigl(\nabla^2h(p)\bigr)=\frac{4p_3^4}{[1-(p_1-p_2)^2]^{3/2}}\left\{h_{33}-1-\frac{(h_{13}-1)^2}{p_3^4/[1-(p_1-p_2)^2]^{3/2}}\right\}.
\]
To bound the expression in braces, note that $1-\log p_3>1-\log 2$ and hence
\[
\left|\sin\!\bigl(\log(1-\log p_3)\bigr)-\frac{\cos\!\bigl(\log(1-\log p_3)\bigr)}{4(1-\log p_3)}\right|
\leq 1+\frac{1}{4(1-\log 2)}<1.815,
\]
and
\begin{align*}
\Bigg|3\sin\!\bigl(\log(1-\log p_3)\bigr)-\frac{7\cos\!\bigl(\log(1-\log p_3)\bigr)}{4(1-\log p_3)}&-\frac{\sin\!\bigl(\log(1-\log p_3)\bigr)+\cos\!\bigl(\log(1-\log p_3)\bigr)}{4(1-\log p_3)^2}\Bigg|
\\
&\leq 3+\frac{7}{4(1-\log 2)}+\frac{\sqrt{2}}{4(1-\log 2)^2}<12.458.
\end{align*}
It follows that
\[
h_{33}-1>200-48-4(12.458)=102.168.
\]
Moreover,
\begin{align*}
\frac{(h_{13}-1)^2}{p_3^4/[1-(p_1-p_2)^2]^{3/2}}
&=p_3^2\sqrt{1-(p_1-p_2)^2}\Bigg[4(p_1-p_2)-\sqrt{1-(p_1-p_2)^2}\\
&\qquad \times\left(\sin\!\bigl(\log(1-\log p_3)\bigr)-\frac{\cos\!\bigl(\log(1-\log p_3)\bigr)}{4(1-\log p_3)}\right)\Bigg]^2\\
&\leq 4\sqrt{1-(p_1-p_2)^2}\left[4|p_1-p_2|+\left(1+\frac{1}{4(1-\log 2)}\right)\sqrt{1-(p_1-p_2)^2}\right]^2\\
&\leq 4\left[32(p_1-p_2)^2\sqrt{1-(p_1-p_2)^2}+2\left(1+\frac{1}{4(1-\log 2)}\right)^2\bigl(1-(p_1-p_2)^2\bigr)^{3/2}\right]\\
&\leq 4\left[32\left(\frac{2}{3\sqrt{3}}\right)+2\left(1+\frac{1}{4(1-\log 2)}\right)^2\right]<75.613,
\end{align*}
where we used $\max_{|x|\leq1}x^2\sqrt{1-x^2}=2/(3\sqrt{3})$. Therefore,
\[
h_{33}-1-\frac{(h_{13}-1)^2}{p_3^4/[1-(p_1-p_2)^2]^{3/2}}>102.168-75.613=26.555>0,
\]
which implies
\[
\Delta_3>\frac{4p_3^4}{[1-(p_1-p_2)^2]^{3/2}}\times 26.555>0.
\]
As all leading principal minors of $\nabla^2 h$ are positive, we have $\nabla^2 h(p) \succ 0$ for all $p \in \interior\, \dom \, h$.

\end{document}